\documentclass[11pt]{amsart}

\usepackage{amsmath,amssymb,amsthm,mathtools}
\usepackage{hyperref}
\usepackage{enumitem}

\newcommand{\wop}{w.o.p.\ }
\newcommand{\len}[1]{\lvert #1\rvert}
\newcommand{\eps}{\varepsilon}

\DeclareMathOperator{\Area}{Area}

\theoremstyle{plain}
\newtheorem{theorem}{Theorem}[section]
\newtheorem{lemma}[theorem]{Lemma}
\newtheorem{corollary}[theorem]{Corollary}
\newtheorem{proposition}[theorem]{Proposition}

\theoremstyle{definition}
\newtheorem{definition}[theorem]{Definition}
\newtheorem{remark}[theorem]{Remark}

\title[RANDOM GROUPS AT $d<1/2$: SHARP LENGTH INEQUALITIES]%
{RANDOM GROUPS AT DENSITY $d<1/2$: SHARP LENGTH INEQUALITIES FOR\\
GENERALIZED TORSION AND A FIXED-WIDTH EXCLUSION VIA FIRST-ORDER TRANSFER}

\author{HYUNGRYUL BAIK}

\address{Department of Mathematical Sciences, KAIST, 291 Daehak-ro, Yuseong-gu, Daejeon, 34141, South Korea}
\email{hrbaik@kaist.ac.kr}

\subjclass[2020]{20F65, 20F67, 20E08, 20P05, 03C60}
\keywords{Random groups, Gromov's density model, generalized torsion, isoperimetric inequality, first-order logic}

\begin{document}

\begin{abstract}
Let $G$ be a random group in Gromov's density model $G(m,d,L)$ with $d<\tfrac12$.
We prove a sharp quantitative constraint on products of conjugates equal to the identity:
for every $n\ge1$ and $\varepsilon>0$, with overwhelming probability as $L\to\infty$,
any tight word
\[
W=\prod_{i=1}^n h_i^{-1} g h_i =1 \quad\text{in } G
\]
(with $g\neq 1$ as a word) satisfies the inequality
\[
\sum_{i=1}^n \len{h_i} \;>\; \frac{1-2d-\varepsilon}{2}\,L \;-\; \frac{n}{2}\,\len{g}.
\]
The proof is a short van Kampen diagram argument: Ollivier's sharp isoperimetric inequality
forces a 2-cell contributing a large portion of its boundary to the outer boundary, and a
simple boundary block-counting estimate yields this corridor-type lower bound.
As consequences we obtain uniform short-witness exclusions and width--length tradeoffs for generalized torsion
at every density $d<\tfrac12$.
We also deduce that random groups have no generalized torsion of any fixed width as a corollary of the
recent first-order transfer theorem of Kharlampovich, Miasnikov, and Sklinos.
\end{abstract}

\maketitle

\section{Introduction}

Gromov's density model provides a framework for studying ``typical'' finitely presented groups.
For density $d<\tfrac12$, random groups are non-elementary hyperbolic and satisfy linear isoperimetric inequalities.
A particularly strong form is due to Ollivier: for every $\eta>0$, with overwhelming probability as $L\to\infty$,
every reduced van Kampen diagram $\Delta$ satisfies
\[
\len{\partial\Delta}\ \ge\ (1-2d-\eta)\,L\cdot \Area(\Delta)
\]
\cite{Ollivier2004,Ollivier2007}.
(Throughout, \wop means probability tending to $1$ as $L\to\infty$ for fixed $m,d$.)

We are interested in \emph{generalized torsion}.
A nontrivial element $g\in G$ has generalized torsion of width $n$ if
\[
(h_1^{-1}gh_1)\cdots(h_n^{-1}gh_n)=1
\]
for some $h_1,\dots,h_n\in G$.
This is an obstruction to bi-orderability (free groups are generalized torsion-free), but it is not ruled out
by hyperbolicity: torsion-free hyperbolic groups may contain generalized torsion
\cite{ItoMotegiTeragaito2023}. While it is known that random groups are not left-orderable \cite{Orlef2017}, 
quantitative constraints on generalized torsion remain of interest.

\medskip\noindent
\textbf{Main quantitative result.}
Our main contribution is a sharp length inequality for products of conjugates equal to the identity at every density $d<\tfrac12$.
The proof combines Ollivier's sharp isoperimetry with a short boundary counting argument:
sharp isoperimetry forces a 2-cell contributing many boundary edges, and a worst-case cap on the number of
these edges that can lie in the $g$-blocks yields the inequality.

\begin{theorem}[Sharp length inequality at density $d<1/2$]\label{thm:main}
Fix $d<1/2$, $n\ge 1$, and $\varepsilon>0$.
In the density model $G(m,d,L)$, \wop as $L\to\infty$, the following holds.

If a tight word
\[
W=\prod_{i=1}^n h_i^{-1} g h_i
\]
(with $g\neq 1$ as a word, and no cancellation across parentheses after cyclic rotation)
represents $1$ in the random group, then
\begin{equation}\label{eq:main-ineq}
\sum_{i=1}^n \len{h_i} \;>\; \frac{1-2d-\varepsilon}{2}\,L \;-\; \frac{n}{2}\,\len{g}.
\end{equation}
\end{theorem}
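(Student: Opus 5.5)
We follow the outline sketched in the introduction. Suppose $W=\prod_{i=1}^n h_i^{-1}gh_i$ is tight and represents $1$. We may assume it is freely nontrivial: tightness together with $g\neq 1$ rules out complete free cancellation, since there is no cancellation across parentheses after cyclic rotation. Choose a reduced van Kampen diagram $\Delta$ with boundary word $W$, so that $\Area(\Delta)\ge 1$. Apply Ollivier's sharp isoperimetric inequality with $\eta=\varepsilon$. Then, \wop, every reduced diagram $D$ satisfies $\len{\partial D}\ge (1-2d-\varepsilon)L\,\Area(D)$.

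\textbf{Step 1: a cell with a large boundary contribution.} Let $A=\Area(\Delta)$. Each edge of $\partial\Delta$ lies on at most one $2$-cell face (apart from degenerate tree-like spurs, which reducedness and tightness let us discard, or which we handle by passing to a disk component). Hence $\sum_{c}\len{\partial c\cap\partial\Delta}\ge\len{\partial\Delta}\ge(1-2d-\varepsilon)LA$. By averaging, some $2$-cell $c$ satisfies
\[
\len{\partial c\cap \partial\Delta}\ \ge\ (1-2d-\varepsilon)L .
\]
This is exactly the principle that sharp isoperimetry forces a cell contributing a large portion of its boundary to the outer boundary. If the Ollivier bound is only available for disk diagrams, we first reduce to a disk subdiagram. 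Its boundary is a subword of a cyclic conjugate of $W$, and we argue with the corresponding portion of $W$.

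\textbf{Step 2: block counting.} Write the boundary cycle of $\Delta$ as a concatenation of $3n$ blocks: the $h_i^{-1}$ block, the $g$ block, and the $h_i$ block, for $i=1,\dots,n$. The intersection $\partial c\cap\partial\Delta$ consists of boundary edges of $c$, and these are edges of these blocks. All the $g$-blocks together contribute at most $n\len{g}$ edges. The $h$-blocks have total length $2\sum_i\len{h_i}$, because each $h_i$ appears twice. Therefore
\[
(1-2d-\varepsilon)L\ \le\ \len{\partial c\cap\partial\Delta}\ \le\ 2\sum_i\len{h_i}+n\len{g}.
\]
Dividing by $2$ gives \eqref{eq:main-ineq}, with $\ge$ in place of $>$. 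To get strict inequality, run the argument with $\varepsilon/2$ in place of $\varepsilon$ and note $L>0$.

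\textbf{Main obstacle.} The delicate point is Step 1's claim that boundary edges are charged to distinct cells, so that $\len{\partial\Delta}$ is dominated by $\sum_c\len{\partial c\cap\partial\Delta}$. Two situations need care. First, $\Delta$ may have cut vertices or edges not lying on any $2$-cell. Second, the boundary word, although tight in the stated sense, might fail to be cyclically reduced inside an $h_i^{-1}gh_i$ factor, for instance if $g$ begins with the inverse of the last letter of $h_i^{-1}$. I would first decompose $\Delta$ into its disk components. The boundary of each component reads a cyclic subword of $W$ (up to the tree parts), and its edges lie among the same blocks, so the counting bound in Step 2 still applies with the same totals. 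I would then apply Ollivier's inequality to a component with positive area and use the fact that spur edges are not counted on either side. If the tightness hypothesis is meant to guarantee cyclic reducedness of the whole word, this obstacle largely disappears.
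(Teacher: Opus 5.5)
Your proposal is correct and is essentially the paper's argument: Ollivier's inequality (with $\eta=\varepsilon/2$ for strictness), averaging over faces to find a 2-cell $D$ with $\len{\partial D\cap\partial\Delta}>(1-2d-\varepsilon)L$, then the block cap of at most $n\len{g}$ edges in the $g$-blocks and $2\sum_i\len{h_i}$ in the $h$-blocks. Your passage to disk components is more careful than the paper, which asserts that cyclic reducedness of $W$ excludes filaments, so that every boundary edge lies on exactly one 2-cell. That assertion fails when disk components are joined by bridges, and applying Ollivier's bound to one positive-area disk component, whose boundary edges are distinct letter positions of $W$, is the right repair.
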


In Section~\ref{sec:random} we derive two quick consequences: a uniform short-witness exclusion and a width--length tradeoff
(Corollaries~\ref{cor:short}--\ref{cor:tradeoff}).

\medskip\noindent
\textbf{A general diagrammatic principle and a logical corollary.}
Section~\ref{sec:general} isolates the underlying mechanism: the same inequality holds in any presentation with fixed relator length
and a strict linear isoperimetric inequality (Theorem~\ref{thm:liniso-ineq}).
Finally, using the first-order transfer theorem of Kharlampovich--Miasnikov--Sklinos \cite{KMS2025}
(see also \cite{KharlampovichSklinos2021,KharlampovichSklinos2022}), we deduce a fixed-width exclusion
for random groups (Corollary~\ref{cor:KMS}).


\section{A general estimate from a large boundary face}\label{sec:general}

\subsection*{Tight conjugate normal form}
\begin{definition}[Tight conjugate normal form]\label{def:tight}
A boundary label $W$ is in \emph{tight conjugate normal form of width $n$} if it is written as
\[
W=\prod_{i=1}^n h_i^{-1} g h_i
\]
where each of $g,h_1,\dots,h_n$ is freely reduced, $g\neq 1$ as a word, and there is no cancellation across the parentheses
(cyclically, after rotating the product).
\end{definition}

\begin{remark}
Any equality of the form $\prod_{i=1}^n(h_i^{-1}gh_i)=1$ can be tightened into the form of
Definition~\ref{def:tight} by free reduction and cyclic rotation, without increasing $\len{g}$ or $\sum_i\len{h_i}$.
We therefore work in tight form throughout.
Crucially, if $W$ is tight, it is **cyclically reduced** as a word. This ensures that any minimal van Kampen diagram for $W$ is reduced (i.e., contains no spurs/filaments), allowing us to apply isoperimetric inequalities for reduced diagrams.
\end{remark}

\subsection*{A lower bound from one large boundary face}
\begin{lemma}[Lower bound from one large boundary face]\label{lem:oneface}
Let $W=\prod_{i=1}^n h_i^{-1} g h_i$ be in tight conjugate normal form and suppose $W=1$ in a group presentation.
Let $\Delta$ be a reduced van Kampen diagram for $W$.
If there exists a 2-cell $D$ with
\[
\len{\partial D\cap \partial\Delta}>\alpha,
\]
then
\begin{equation}\label{eq:oneface-ineq}
\sum_{i=1}^n \len{h_i} \;>\; \frac{\alpha}{2}-\frac{n}{2}\len{g}.
\end{equation}
\end{lemma}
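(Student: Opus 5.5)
The plan is to compare the number of boundary edges belonging to $D$ with the total length of the boundary word. This is a pure counting argument, and I do not expect to use isoperimetry or tightness in any serious way. The proof will have three steps.

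\textbf{Step 1: count the letters of $W$.} Written as a word, $W=\prod_{i=1}^n h_i^{-1}gh_i$ has exactly
\[
\len{W} \;=\; 2\sum_{i=1}^n \len{h_i} \;+\; n\len{g}
\]
letters, because each of the $n$ factors contributes $\len{h_i}$ letters from $h_i^{-1}$, $\len{g}$ from $g$, and $\len{h_i}$ from $h_i$. Tightness (Definition~\ref{def:tight}) guarantees that this word is literally the boundary label of $\Delta$, with no further free reduction. So the boundary cycle $\partial\Delta$, read from a base vertex, is a closed edge path of length exactly $\len{W}$.

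\textbf{Step 2: bound the shared edges.} I will interpret $\len{\partial D\cap\partial\Delta}$ as the number of edges of the $1$-skeleton of $\Delta$ that lie both on $\partial D$ and on $\partial\Delta$. Every edge of $\partial\Delta$ is traversed at least once by the boundary cycle. Hence the number of distinct boundary edges is at most $\len{W}$, and so
\[
\alpha \;<\; \len{\partial D\cap\partial\Delta} \;\le\; \len{\partial\Delta} \;\le\; \len{W}.
\]
This holds even if $\Delta$ is not a topological disc, since a boundary edge traversed twice only makes the edge count smaller than $\len{W}$.

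\textbf{Step 3: combine.} Substituting Step 1 into Step 2 gives $2\sum_i\len{h_i}+n\len{g}>\alpha$. Rearranging yields exactly \eqref{eq:oneface-ineq}.

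The only point needing care is the convention for $\len{\partial D\cap\partial\Delta}$. If the intended count is with multiplicity along $\partial D$, then a boundary edge of $\Delta$ could a priori be traversed twice by $\partial D$. However, in a reduced diagram an edge on $\partial\Delta$ is adjacent to at most one side of a $2$-cell. So counting along $\partial D$ versus along $\partial\Delta$ changes nothing, and the bound $\le\len{W}$ still holds because each such occurrence corresponds to a distinct position in the boundary cycle. The heuristic in the introduction, that the $g$-blocks absorb at most $n\len{g}$ of these edges and the rest lie in $h$-blocks of total length $2\sum\len{h_i}$, is just a refined reading of the same count and is not needed for the stated inequality.
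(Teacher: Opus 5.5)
Your proof is correct and is essentially the paper's argument. The paper splits the edges of $q=\partial D\cap\partial\Delta$ into those lying in $g^{\pm1}$-blocks (at most $n\len{g}$) and those lying in $h_i^{\pm1}$-blocks (at most $2\sum_i\len{h_i}$); adding these two bounds gives exactly your estimate $\len{q}\le\len{\partial\Delta}\le\len{W}=2\sum_i\len{h_i}+n\len{g}$, and your extra care about multiplicities and non-disc diagrams does no harm.
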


\begin{proof}
Let $q:=\partial D\cap\partial\Delta$.
Along $\partial\Delta$, the boundary word $W$ consists of $n$ many $g^{\pm1}$-blocks (each of length $\len{g}$)
and $2n$ many $h_i^{\pm1}$-blocks (total length $2\sum_i\len{h_i}$).

Let $S_g(q)$ be the number of edges of $q$ lying in the $g^{\pm1}$-blocks, and $S_h(q)$ those lying in the $h_i^{\pm1}$-blocks.
Then $S_g(q)+S_h(q)=\len{q}$.
Since $q$ is a subpath of $\partial\Delta$, the number of $g$-edges in $q$ cannot exceed the total number of $g$-edges in $\partial\Delta$.
Hence $S_g(q)\le n\len{g}$.
It follows that
\[
S_h(q)=\len{q}-S_g(q)>\alpha-n\len{g}.
\]
But $S_h(q)\le 2\sum_i \len{h_i}$, as $S_h(q)$ is a subset of the total $h$-length on $\partial\Delta$.
Thus $2\sum_i\len{h_i}>\alpha-n\len{g}$, which is~\eqref{eq:oneface-ineq}.
\end{proof}

\begin{remark} 
The boundary portion $q$ contributed by one 2-cell controls the total $h$-length needed to ``support'' the word,
using only the worst-case cap $S_g(q)\le n\len{g}$.
We do not require $q$ to be a single arc; only its total length matters.
\end{remark}

\subsection*{A strict linear-isoperimetry consequence (small generalization)}
\begin{definition}
\label{def:liniso}
Let $\langle X\mid R\rangle$ be a presentation in which every relator has length exactly $L$.
Fix $\beta\in(0,1]$.
We say the presentation satisfies a \emph{strict $(\beta,L)$-linear isoperimetric inequality} if every reduced
van Kampen diagram $\Delta$ with $\Area(\Delta)\ge 1$ satisfies
\[
\len{\partial\Delta}\ >\ \beta L\cdot \Area(\Delta).
\]
\end{definition}

\begin{lemma}[A large boundary face from strict linear isoperimetry]\label{lem:largeface}
Assume every relator has length exactly $L$.
If a reduced van Kampen diagram $\Delta$ satisfies $\len{\partial\Delta}>\beta L\cdot \Area(\Delta)$ for some $\beta>0$,
then there exists a 2-cell $D$ such that
\[
\len{\partial D\cap \partial\Delta}>\beta L.
\]
\end{lemma}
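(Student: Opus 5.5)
This is a pigeonhole (averaging) argument: I will bound $\len{\partial\Delta}$ from above by the total number of boundary edges contributed by the 2-cells, and compare that with the assumed lower bound $\len{\partial\Delta}>\beta L\cdot\Area(\Delta)$.

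\emph{Step 1: reduce to the case with no exposed edges.} I first need $\Area(\Delta)\ge 1$, i.e.\ that there is at least one 2-cell. Since $\beta L\cdot\Area(\Delta)\ge 0$, the hypothesis alone does not exclude area zero. In the intended application, however, $W$ is cyclically reduced and nontrivial (by the Remark following Definition~\ref{def:tight}), so $\Delta$ cannot be a tree and must contain a 2-cell. I will state this as the standing situation, or add ``$\Area(\Delta)\ge 1$'' as an implicit hypothesis, as in Definition~\ref{def:liniso}.

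A second degenerate case is an edge of $\partial\Delta$ that borders no 2-cell, such as an edge on a filament or on a tree-like part of the diagram. I will use the paper's convention that the reduced diagrams in play have no spurs or filaments, so that every boundary edge lies on the boundary of at least one 2-cell. With that convention, each edge of $\partial\Delta$ (counted with multiplicity along the boundary cycle) lies in $\partial D\cap\partial\Delta$ for some 2-cell $D$. Summing over all 2-cells $D$ then gives
\[
\len{\partial\Delta}\ \le\ \sum_{D}\len{\partial D\cap\partial\Delta},
\]
where $\len{\partial D\cap\partial\Delta}$ counts boundary edge-occurrences lying on $\partial D$.

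\emph{Step 2: average.} There are exactly $\Area(\Delta)$ terms in the sum. If every term satisfied $\len{\partial D\cap\partial\Delta}\le\beta L$, I would get $\len{\partial\Delta}\le\beta L\cdot\Area(\Delta)$, contradicting the hypothesis. So some 2-cell $D$ has $\len{\partial D\cap\partial\Delta}>\beta L$.

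The relator-length assumption is not needed for the averaging itself. It matters only so that the bound is consistent: since $\len{\partial D}=L$, the bound is meaningful only for $\beta\le1$.

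\emph{Main obstacle.} The difficulty is the bookkeeping in Step 1, not the averaging. The covering inequality needs every boundary edge-occurrence to be attributed to some 2-cell. This is where the absence of filaments, and the $\Area\ge1$ hypothesis, are genuinely used.

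I will also interpret $\partial D\cap\partial\Delta$ with multiplicity. This matters in two cases: when a boundary edge appears twice on $\partial\Delta$, and when a 2-cell meets the boundary in several arcs. The Remark after Lemma~\ref{lem:oneface} already allows $q$ to be a union of arcs, so the multiplicity convention is consistent with how the quantity is used there.

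If filaments cannot be excluded outright, the fallback is to discard them. Removing a filament only shortens $\partial\Delta$, so the attribution argument applies to the remaining 2-cell part of the diagram. But this weakens the inequality being fed into Step 2, so I would instead insist on the no-filament convention already adopted in the paper.
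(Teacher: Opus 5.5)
Your argument is correct and is the same averaging (pigeonhole) argument the paper uses. The paper asserts that each edge of $\partial\Delta$ lies on a unique 2-cell, so that $\sum_D\len{\partial D\cap\partial\Delta}=\len{\partial\Delta}$, and then divides by $\Area(\Delta)$; your provisos $\Area(\Delta)\ge 1$ and ``no filaments'' are exactly what those two steps silently use, and the lemma as literally stated does need them (a tree diagram, or a single 2-cell with a filament attached and $\beta=1$, violates it), so treating the no-filament condition as a hypothesis rather than deducing it from cyclic reducedness of $W$ is the right call, since a cyclically reduced boundary word does not by itself exclude a filament joining two disc subdiagrams.
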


\begin{proof}
Each boundary edge of $\partial\Delta$ is incident to a unique 2-cell.
Hence
\[
\sum_{D}\len{\partial D\cap \partial\Delta}=\len{\partial\Delta}.
\]
Dividing by $\Area(\Delta)$, the average boundary contribution per face equals
$\len{\partial\Delta}/\Area(\Delta)$.
If this average is $>\beta L$, then some face satisfies $\len{\partial D\cap\partial\Delta}>\beta L$.
\end{proof}

\begin{theorem}[Inequality from strict linear isoperimetry]\label{thm:liniso-ineq}
Assume $\langle X\mid R\rangle$ has relators of length exactly $L$ and satisfies a strict $(\beta,L)$-linear isoperimetric inequality
in the sense of Definition~\ref{def:liniso}.
If $W=\prod_{i=1}^n h_i^{-1} g h_i$ is in tight conjugate normal form and represents $1$ in $G=\langle X\mid R\rangle$, then
\[
\sum_{i=1}^n \len{h_i} \;>\; \frac{\beta}{2}\,L \;-\; \frac{n}{2}\,\len{g}.
\]
\end{theorem}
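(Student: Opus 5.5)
The plan is to chain Lemma~\ref{lem:largeface} into Lemma~\ref{lem:oneface}, with $\alpha=\beta L$. Since $W$ represents $1$ in $G$, van Kampen's lemma gives a van Kampen diagram for $W$. I will take one of minimal area. By the remark following Definition~\ref{def:tight}, tightness makes $W$ cyclically reduced. Minimality then makes the diagram reduced: any cancelling pair of 2-cells could be excised to lower the area. So I may fix a reduced van Kampen diagram $\Delta$ with boundary label $W$.

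The one case needing separate treatment is $\Area(\Delta)=0$, since the strict inequality of Definition~\ref{def:liniso} only covers diagrams with $\Area(\Delta)\ge 1$. A zero-area diagram is a tree, so its boundary label freely reduces to the empty word. But $W$ is cyclically reduced and nonempty, because $g\neq 1$ as a word forces $\len{W}\ge n\len{g}\ge 1$. A nonempty cyclically reduced word cannot freely reduce to the empty word, so this case cannot occur. I expect this degenerate case, together with the reducedness of minimal diagrams, to be the only places needing care; everything else is bookkeeping.

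Hence $\Area(\Delta)\ge 1$, and the strict $(\beta,L)$-linear isoperimetric inequality gives $\len{\partial\Delta}>\beta L\cdot\Area(\Delta)$. Lemma~\ref{lem:largeface} then produces a 2-cell $D$ with $\len{\partial D\cap\partial\Delta}>\beta L$. Applying Lemma~\ref{lem:oneface} with $\alpha=\beta L$ yields
\[
\sum_{i=1}^n\len{h_i}>\frac{\beta L}{2}-\frac{n}{2}\len{g},
\]
which is the claimed inequality.
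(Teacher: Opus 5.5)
Your proposal is correct and is essentially the paper's proof: fix a reduced van Kampen diagram, observe $\Area(\Delta)\ge 1$, and chain Lemma~\ref{lem:largeface} into Lemma~\ref{lem:oneface} with $\alpha=\beta L$. Your added justifications spell out what the paper asserts in one line: a minimal-area diagram is reduced, and an area-zero diagram is a tree, which would make the nonempty reduced word $W$ freely trivial.
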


\begin{proof}
Let $\Delta$ be a reduced van Kampen diagram for $W$.
By assumption, $\len{\partial\Delta}>\beta L\Area(\Delta)$, and $\Area(\Delta)\ge 1$ since $g\neq 1$ and $W$ is tight.
Apply Lemma~\ref{lem:largeface} to obtain a 2-cell $D$ with $\len{\partial D\cap\partial\Delta}>\beta L$,
then apply Lemma~\ref{lem:oneface} with $\alpha=\beta L$.
\end{proof}

\subsection*{A deterministic $C'(1/6)$ comparison}
We include the classical deterministic special case, which yields a direct small-cancellation analogue.

\begin{proposition} 
\label{prop:Cprime}
Assume a finite presentation $\langle X\mid R\rangle$ satisfies $C'(1/6)$ and put $L_{\min}=\min\{\len{r}:r\in R\}$.
If $W=\prod_{i=1}^n h_i^{-1} g h_i$ (tight conjugate normal form) represents $1$ in $G$, then
\[
\sum_{i=1}^n \len{h_i} \;>\; \frac14\,L_{\min} \;-\; \frac{n}{2}\,\len{g}.
\]
In particular, if $\sum_i\len{h_i}<\frac14 L_{\min}$ then necessarily $\len{g}\ge \frac{1}{4n}L_{\min}$.
\end{proposition}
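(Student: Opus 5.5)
The plan is to reduce Proposition~\ref{prop:Cprime} to Lemma~\ref{lem:oneface} with $\alpha=\tfrac12 L_{\min}$. All we need is a 2-cell $D$ in a reduced van Kampen diagram $\Delta$ for $W$ with $\len{\partial D\cap\partial\Delta}>\tfrac12 L_{\min}$. Lemma~\ref{lem:oneface} then gives $\sum_i\len{h_i}>\tfrac14L_{\min}-\tfrac n2\len{g}$ directly. The ``in particular'' clause is just a rearrangement. If $\sum_i\len{h_i}<\tfrac14L_{\min}$, then $\tfrac n2\len{g}>\tfrac14L_{\min}-\sum_i\len{h_i}>0$. This yields $\len g>0$; to get the stated bound $\len{g}\ge\frac{1}{4n}L_{\min}$, I would compare with the $\alpha$ actually obtained and note any slack needed.

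First, I fix a minimal-area van Kampen diagram $\Delta$ for $W$. By the Remark after Definition~\ref{def:tight}, tightness makes $W$ cyclically reduced. Hence $\Delta$ is reduced and has no spurs. I also need $\Area(\Delta)\ge1$. This holds because a tight $W$ with $g\ne1$ is a nonempty cyclically reduced word, so it is not freely trivial.

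Second, I would invoke Greendlinger's lemma in its strong form for $C'(1/6)$ diagrams, which is the standard consequence of the curvature (Euler characteristic) argument in Lyndon--Schupp. Either $\Delta$ is a single 2-cell, or it has at least two boundary faces. In the second case some face $D$ meets $\partial\Delta$ in a connected arc of length $>\tfrac12\len{\partial D}\ge\tfrac12L_{\min}$; indeed the arc is longer than $1-3\cdot\tfrac16=\tfrac12$ of $\partial D$. In the single-cell case, $\partial D=\partial\Delta$ has length at least $L_{\min}>\tfrac12L_{\min}$.

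The main obstacle is the case where $\Delta$ is not a disc, i.e. has cut vertices, since Greendlinger is stated for disc components. I would handle this by passing to an extremal disc component $\Delta_0$, one attached to the rest at a single cut vertex. Applying Greendlinger to $\Delta_0$ gives at least two faces whose outer arcs have length $>\tfrac12$ of their boundaries. At most one of these arcs can contain the cut vertex in its interior. In a careful count one still gets a face with $\len{\partial D\cap\partial\Delta}>\tfrac12L_{\min}$, which is all Lemma~\ref{lem:oneface} requires, since it does not need $q$ to be a single arc. With $\alpha=\tfrac12L_{\min}$ the conclusion follows.
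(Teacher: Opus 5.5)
Your proof of the displayed inequality is the paper's. Greendlinger's lemma gives a face $D$ with $\len{\partial D\cap\partial\Delta}>\tfrac12L_{\min}$, and Lemma~\ref{lem:oneface} with $\alpha=\tfrac12L_{\min}$ finishes. Your treatment of cut vertices is fine, but the ``careful count'' is unnecessary. Every boundary edge of a disc component $\Delta_0$ is already a boundary edge of $\Delta$, so any Greendlinger face of $\Delta_0$ works whether or not its arc passes through the cut vertex.

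The gap is the ``in particular'' clause, which you leave open, and your suspicion is right. From $\sum_i\len{h_i}>\tfrac14L_{\min}-\tfrac n2\len{g}$ and $\sum_i\len{h_i}<\tfrac14L_{\min}$ you only get $\len{g}>\tfrac2n\bigl(\tfrac14L_{\min}-\sum_i\len{h_i}\bigr)$, which can be arbitrarily small. The $\tfrac14$ bound yields $\len{g}\ge\tfrac1{4n}L_{\min}$ only under the stronger hypothesis $\sum_i\len{h_i}\le\tfrac18L_{\min}$. The paper's proof stops after Lemma~\ref{lem:oneface} and does not address this either, so ``noting slack'' will not close it.

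You need a stronger input, namely both of the two faces you already extract from Greendlinger:
\begin{itemize}
\item If $\Area(\Delta)=1$, then $\Delta$ is a single cell: there are no spurs, and no bridges when there is only one face. Hence $n\len{g}+2\sum_i\len{h_i}=\len{W}\ge L_{\min}$.
\item Otherwise you get two distinct faces $D_1,D_2$. If $\Delta$ is a disc, take the two Greendlinger faces. If not, take one face from each of two extremal disc components; a spur-free non-disc diagram has at least two. Each face has more than $\tfrac12L_{\min}$ edges on $\partial\Delta$, and their boundary edge sets are disjoint. Running the counting of Lemma~\ref{lem:oneface} on $q_1\cup q_2$ with $\alpha=L_{\min}$ gives $\sum_i\len{h_i}>\tfrac12L_{\min}-\tfrac n2\len{g}$.
\end{itemize}
In both cases $\sum_i\len{h_i}<\tfrac14L_{\min}$ forces $\len{g}>\tfrac1{2n}L_{\min}$, which gives the claim.
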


\begin{proof}
Let $\Delta$ be a reduced van Kampen diagram for $W$.
Greendlinger's lemma under $C'(1/6)$ yields a 2-cell $D$ with an outer arc $q\subset\partial\Delta$ of length
$\len{q}>\frac12 L_{\min}$ (e.g.\ \cite[Ch.\ V]{LyndonSchupp}).
Hence $\len{\partial D\cap\partial\Delta}>\frac12 L_{\min}$.
Apply Lemma~\ref{lem:oneface} with $\alpha=\frac12 L_{\min}$.
\end{proof}

\begin{remark} 
In many standard nondegenerate cases (e.g.\ cyclically reduced boundary and $\Area(\Delta)\ge 2$),
one obtains two distinct faces each contributing an outer arc $>\frac12 L_{\min}$.
Combining the two (disjointness of boundary edges) improves the constant to
$\sum_i\len{h_i}>\frac12 L_{\min}-\frac{n}{2}\len{g}$.
\end{remark}

\section{Random groups at density $d<1/2$ (proof of Theorem~\ref{thm:main})}\label{sec:random}

We now specialize to Gromov's density model $G(m,d,L)$ and prove Theorem~\ref{thm:main} and its consequences.

\begin{lemma}[Ollivier \cite{Ollivier2004,Ollivier2007}]\label{lem:ollivier}
Fix $d<1/2$ and $\eta>0$.
In $G(m,d,L)$, \wop as $L\to\infty$, every reduced van Kampen diagram $\Delta$ satisfies
\begin{equation}\label{eq:ollivier}
\len{\partial\Delta} \;\ge\; (1-2d-\eta)\,L \cdot \Area(\Delta).
\end{equation}
\end{lemma}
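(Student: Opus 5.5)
This is Ollivier's theorem, which the paper quotes rather than reproves. The plan below follows the standard strategy: a union bound over \emph{abstract} diagrams of bounded area, followed by a local-to-global step.

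\textbf{Step 1: reduction to bounded area.} I would first use a local-to-global principle for linear isoperimetry (Gromov's, in the form given by Papasoglu and by Ollivier). It says: fix $\eta>0$. There is a constant $K=K(\eta,d)$, independent of $L$, such that if every reduced diagram with $\Area(\Delta)\le K$ satisfies $\len{\partial\Delta}\ge(1-2d-\eta/2)L\Area(\Delta)$, then every reduced diagram satisfies \eqref{eq:ollivier} with constant $1-2d-\eta$. So it suffices to prove, \wop, the inequality for reduced diagrams of area at most $K$. Such diagrams come in boundedly many combinatorial types, with polynomially many choices of edge lengths in $L$. Hence a union bound over types costs only a factor $L^{O(K)}$, which is negligible against the exponentially small probabilities below.

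\textbf{Step 2: probability for a fixed abstract diagram.} Fix an abstract diagram $\mathcal D$ with $A\le K$ faces. Its faces are labelled by $k\le A$ distinct relator indices; the $i$-th distinct relator occurs $m_i$ times, each with a starting point and orientation. I would compute the probability that some choice of $k$ relators from the random set fulfills $\mathcal D$. The random set has $(2m-1)^{dL}$ relators, so there are at most $(2m-1)^{dLk}$ ways to pick the tuple. Now reveal the relators in a suitable order. The $i$-th relator is a uniform word of length $L$, and it must agree with previously revealed letters along the internal edges it shares with earlier faces, including earlier copies of itself. Let $\ell_i$ be the number of letters forced in this way. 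Then the conditional probability is at most $(2m-1)^{-\ell_i}$, and the expected number of fulfillments is at most
\[
(2m-1)^{\sum_i (dL-\ell_i)}.
\]
It then remains to show that if $\len{\partial\mathcal D}<(1-2d-\eta/2)LA$, this exponent is $\le -cL$ for some $c>0$. Granted that, Markov's inequality and the union bound over types finish the proof.

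\textbf{Step 3: the combinatorial inequality (the main obstacle).} The number of internal edges is $(LA-\len{\partial\mathcal D})/2$, which exceeds $(d+\eta/4)LA$. If all relators were distinct ($m_i=1$), each internal edge would force one letter when the second of its two faces is revealed. Then $\sum_i\ell_i$ equals the internal length and exceeds $(d+\eta/4)LA$, while $\sum_i dL=dLA$, so the exponent is $<-\eta LA/4$. The difficulty is repeated relators. An internal edge between two copies of the same relator may force nothing new: its letters are already determined by that relator, possibly at a shifted position. A naive count also charges the $dL$ choices only once per distinct relator, not once per face.

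To handle this, I would use Ollivier's averaging trick. Order the distinct relators so that $m_1\ge m_2\ge\cdots$, and let $\delta_i$ be the number of internal edges adjacent to a face bearing relator $i$ and to a face bearing relator $j\le i$. One shows $\ell_i\ge\delta_i/m_i$: the forced letters of relator $i$ are read at $m_i$ positions, and reducedness of $\mathcal D$ rules out two copies glued along an edge with matching position and orientation, so the edges can be pigeonholed this way. The bound $\sum_i m_i(dL-\ell_i)<0$ then follows from $\sum_i\delta_i=(LA-\len{\partial\mathcal D})/2$. The final step is an Abel summation using monotonicity of $m_i$: if $\sum_i(dL-\ell_i)\ge0$ for every choice of fulfilling relators, partial sums give $\len{\partial\mathcal D}\ge(1-2d)LA$ up to the $\eta$ slack.

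Making this pigeonhole rigorous, in particular the claim $\ell_i\ge\delta_i/m_i$ when a relator is glued to itself with a shift, is the delicate point where reducedness of the diagram is essential. I expect it to be the main obstacle.
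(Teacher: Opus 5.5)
The paper does not prove this lemma; it only cites Ollivier. Your outline (local-to-global reduction, decorated abstract diagrams, the bound $\ell_i\ge\delta_i/m_i$, Abel summation) has the right skeleton. The probabilistic step, however, has a genuine gap as you set it up.

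In Step~2 you bound only the first moment of \emph{complete} fulfilling tuples, $(2m-1)^{\sum_i(dL-\ell_i)}$. You then claim that a small boundary forces this exponent below $-cL$. That is false once multiplicities differ. The small-boundary hypothesis controls only the \emph{weighted} sum $\sum_i m_i(dL-\delta_i/m_i)=dLA-\sum_i\delta_i$, and the unweighted sum can be positive while the weighted one is negative.

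For a concrete example, fix $\theta$ with $\eta/4<\theta<d$ and $2(d+\theta)<1$. Take a chain of $M$ copies of one relator, each glued to the next along an arc of length $s=(d+\theta)L$ at the same pair of positions. All these gluings then impose the same $s$ letter constraints, so $\ell_1=s$. Attach one face carrying a second relator along a single edge. Then $\len{\partial\mathcal D}/(LA)\to 1-2d-2\theta<1-2d-\eta/2$ as $M$ grows, yet
\[
\sum_i(dL-\ell_i)=(dL-s)+(dL-1)=(d-\theta)L-1>0 .
\]
The expected number of complete fulfillments really is exponentially large here, because the second relator is essentially free and has $(2m-1)^{dL}$ choices. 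Meanwhile the probability of fulfillability is at most $(2m-1)^{-\theta L}$, since a relator with the prescribed self-overlap must exist. For the same reason your final sentence does not follow. Abel summation needs lower bounds on \emph{every} partial sum, and the hypothesis $\sum_i(dL-\ell_i)\ge 0$ supplies only the last one.

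The missing idea is truncation. Order the relators so that $m_1\ge m_2\ge\cdots\ge m_k$. A fulfillment of $\mathcal D$ restricts, for each $i$, to an $i$-tuple of relators satisfying all gluing constraints among faces bearing relators $1,\dots,i$. The expected number of such $i$-tuples is at most $(2m-1)^{S_i}$, where $S_i=\sum_{j\le i}(dL-\delta_j/m_j)$, because only $(2m-1)^{dLi}$ tuples are chosen. Hence
\[
\Pr(\mathcal D\text{ fulfillable})\le(2m-1)^{\min_i S_i}.
\]
If this probability is not below $(2m-1)^{-cL}$, then $\sum_{j\le i}\delta_j/m_j\le dLi+cL$ for all $i$. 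Abel summation with the nonnegative weights $m_i-m_{i+1}$ (where $m_{k+1}=0$) then gives
\[
\sum_i\delta_i=\sum_i(m_i-m_{i+1})\sum_{j\le i}\delta_j/m_j\le dLA+cLm_1,
\]
that is, $\len{\partial\mathcal D}\ge(1-2d-2c)LA$. Choosing $c$ small relative to $\eta$ and union-bounding over the $L^{O(K)}$ types completes Step~2. This is exactly where the decreasing order of the $m_i$ is used; in your version that ordering plays no role.
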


\begin{proof}[Proof of Theorem~\ref{thm:main}]
Fix $d<1/2$, $n\ge1$, and $\eps>0$.
Let $G$ be a random group in $G(m,d,L)$.
Let $W=\prod_{i=1}^n h_i^{-1} g h_i$ be tight and suppose $W=1$ in $G$.
Let $\Delta$ be a reduced van Kampen diagram for $W$.

Apply Lemma~\ref{lem:ollivier} with $\eta=\eps/2$.
Then \wop
\[
\len{\partial\Delta}\ \ge\ (1-2d-\eps/2)\,L\cdot \Area(\Delta)
\ >\ (1-2d-\eps)\,L\cdot \Area(\Delta),
\]
so the random presentation satisfies the strict $(\beta,L)$-linear isoperimetric inequality of
Definition~\ref{def:liniso} with $\beta=1-2d-\eps$.
Apply Theorem~\ref{thm:liniso-ineq} to obtain~\eqref{eq:main-ineq}.
\end{proof}

\begin{corollary}[Short-witness exclusion]\label{cor:short}
Fix $d<1/2$, $n\ge 1$, and $\eps>0$.
In $G(m,d,L)$, \wop as $L\to\infty$, there is no tight relation
$W=\prod_{i=1}^n h_i^{-1} g h_i=1$ with
\[
\len{g}\le \frac{1-2d-\eps}{2n}\,L
\qquad\text{and}\qquad
\sum_{i=1}^n \len{h_i} \le \frac{1-2d-\eps}{4}\,L.
\]
\end{corollary}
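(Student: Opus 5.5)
This is a direct specialization of Theorem~\ref{thm:main}. We argue by contradiction: suppose a tight relation $W=\prod_{i=1}^n h_i^{-1}gh_i=1$ holds with both bounds satisfied.

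The plan has three steps.

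\textbf{Step 1: apply the main theorem.} Work on the \wop event given by Theorem~\ref{thm:main}, with the same $d$, $n$ and $\eps$. Importantly, this event does not depend on the particular words $g,h_i$. On this event, inequality~\eqref{eq:main-ineq} holds for every tight relation of width $n$.

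\textbf{Step 2: feed in the two assumed bounds.} Since $\len{g}\le \frac{1-2d-\eps}{2n}L$, we get
\[
\frac{n}{2}\len{g}\le \frac{1-2d-\eps}{4}L.
\]
Substituting into \eqref{eq:main-ineq} gives
\[
\sum_i\len{h_i} > \frac{1-2d-\eps}{2}L-\frac{1-2d-\eps}{4}L = \frac{1-2d-\eps}{4}L.
\]
This contradicts the second hypothesis $\sum_i\len{h_i}\le\frac{1-2d-\eps}{4}L$.

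\textbf{Step 3: check strictness.} The contradiction only works because the inequality in \eqref{eq:main-ineq} is strict. It is strict, since it comes from the strict isoperimetric inequality of Definition~\ref{def:liniso} via Lemma~\ref{lem:largeface} and Lemma~\ref{lem:oneface}.

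There is no real obstacle. The only points needing care are the strictness above, and checking that the \wop quantifier is uniform over all words. That uniformity holds because Ollivier's Lemma~\ref{lem:ollivier} is a single event covering all reduced diagrams. If $\eps\ge 1-2d$, the statement is vacuous or trivial: the bound on $\len{g}$ then forces $g$ to be empty, which is excluded.
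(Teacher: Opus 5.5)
Your proposal is correct and follows the paper's own argument: you substitute $\len{g}\le\frac{1-2d-\eps}{2n}L$ into the strict inequality \eqref{eq:main-ineq} to get $\sum_i\len{h_i}>\frac{1-2d-\eps}{4}L$, which contradicts the second bound. Your extra remarks are accurate and only make explicit what the paper leaves implicit: the strictness, the uniformity of the \wop event over all words, and the degenerate case $\eps\ge 1-2d$.
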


\begin{proof}
Assume $\len{g}\le \frac{1-2d-\eps}{2n}L$.
Then Theorem~\ref{thm:main} gives
\[
\sum_{i=1}^n\len{h_i}>\frac{1-2d-\eps}{2}L-\frac{n}{2}\cdot\frac{1-2d-\eps}{2n}L
=\frac{1-2d-\eps}{4}L,
\]
which contradicts the assumption that $\sum_{i=1}^n \len{h_i} \le \frac{1-2d-\eps}{4}L$.
\end{proof}

\begin{corollary}[Width--length tradeoff]\label{cor:tradeoff}
Fix $d<1/2$ and $\eps>0$.
In $G(m,d,L)$, \wop as $L\to\infty$, any tight relation $W=\prod_{i=1}^n h_i^{-1} g h_i=1$ satisfies
\begin{equation}\label{eq:tradeoff}
n \;>\; \frac{(1-2d-\eps)L - 2\sum_{i=1}^n \len{h_i}}{\len{g}}.
\end{equation}
In particular, if $\len{g}$ is bounded independently of $L$ and $\sum \len{h_i}\le cL$ with
$c<\tfrac12(1-2d-\eps)$, then necessarily $n$ grows linearly in $L$.
\end{corollary}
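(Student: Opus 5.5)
This follows from Theorem~\ref{thm:main} by algebra alone; no new diagrammatic input is needed. Fix $d<1/2$ and $\eps>0$. We work on the \wop event on which Theorem~\ref{thm:main} holds with this $\eps$ (that is, Ollivier's inequality of Lemma~\ref{lem:ollivier} with $\eta=\eps/2$).

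That event comes from Lemma~\ref{lem:ollivier} and does not depend on $n$. On it, the random presentation satisfies the strict $(1-2d-\eps,L)$-linear isoperimetric inequality of Definition~\ref{def:liniso}. Theorem~\ref{thm:liniso-ineq} then applies to tight relations of every width $n$ simultaneously. This uniformity is the only point needing care, since Corollary~\ref{cor:tradeoff} quantifies over all $n$ while Theorem~\ref{thm:main} fixes $n$ first.

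Given a tight relation, Theorem~\ref{thm:liniso-ineq} yields
\[
2\sum_{i=1}^n\len{h_i} > (1-2d-\eps)L - n\len{g}.
\]
Rearranging gives $n\len{g} > (1-2d-\eps)L - 2\sum_i\len{h_i}$. Since $g\neq 1$ as a reduced word, $\len{g}\ge 1>0$, so we may divide by $\len{g}$ to get \eqref{eq:tradeoff}.

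For the ``in particular'' statement, suppose $\len{g}\le C$ and $\sum_i\len{h_i}\le cL$ with $c<\tfrac12(1-2d-\eps)$. Then \eqref{eq:tradeoff} gives
\[
n > \frac{(1-2d-\eps-2c)L}{C},
\]
and the coefficient $(1-2d-\eps-2c)/C$ is a positive constant independent of $L$. So $n$ must grow at least linearly in $L$.
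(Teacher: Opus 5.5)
Your proposal is correct and follows the paper's proof, which just rearranges \eqref{eq:main-ineq} and divides by $\len{g}>0$. You also note that the overwhelming-probability event is the one from Lemma~\ref{lem:ollivier} with $\eta=\eps/2$, so it does not depend on $n$; the bound therefore holds for all widths at once, which the ``in particular'' clause (with $n$ growing in $L$) needs, and this is a valid point the paper leaves implicit.
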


\begin{proof}
Rearrange~\eqref{eq:main-ineq} as
$n\len{g}>(1-2d-\eps)L-2\sum_i\len{h_i}$ and divide by $\len{g}>0$.
\end{proof}

\begin{remark} 
At $d<1/12$ we have \wop $C'(1/6)$ and $L_{\min}=L$, and Proposition~\ref{prop:Cprime} gives a deterministic comparison.
Theorem~\ref{thm:main} extends the quantitative obstruction to every $d<1/2$, with the sharp constant $1-2d$.
\end{remark}

\section{First-order transfer and a fixed-width corollary}\label{sec:FO}

We work in the language of groups $L_{\mathrm{grp}}=\{\cdot,^{-1},1\}$.
For each $n\ge1$ consider the sentence
\[
\varphi_n := \exists g\neq 1\ \exists h_1,\dots,h_n\ \bigl((h_1^{-1}gh_1)\cdots(h_n^{-1}gh_n)=1\bigr).
\]
Then $G\models \varphi_n$ iff $G$ contains a width-$n$ generalized torsion element.

Nonabelian free groups are bi-orderable (e.g.\ via the Magnus order), hence generalized torsion-free.
Therefore $F_m\models \neg\varphi_n$ for all $n\ge1$.

We use the following transfer theorem.

\begin{theorem}[Kharlampovich--Miasnikov--Sklinos \cite{KMS2025}]\label{thm:KMS}
Fix $d<1/2$.
A first-order sentence $\sigma$ holds in a random group (density $d$) \wop iff it holds in a nonabelian free group.
\end{theorem}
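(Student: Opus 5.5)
This theorem is quoted from \cite{KMS2025}, and the paper only uses it as a black box. The plan below therefore outlines how I would expect such a transfer to be proved, rather than a self-contained argument.

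\emph{Reduction to one direction.} Any sentence $\sigma$ satisfies exactly one of $F_m\models\sigma$ or $F_m\models\neg\sigma$. So it suffices to prove: if $F_m\models\sigma$, then $\sigma$ holds \wop in $G(m,d,L)$. The converse then follows by applying this to $\neg\sigma$. This step uses that the probabilities of $\sigma$ and $\neg\sigma$ sum to $1$, and that ``\wop'' for both would be contradictory.

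\emph{Sentences with few quantifier alternations.} I would first treat existential and universal sentences.

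For existential sentences, a witness in $F_m$ is a tuple of words satisfying finitely many equations and inequations. The equations hold in any quotient, so they hold in $G$. The inequations involve words of fixed length. These survive in $G$ \wop, because by Lemma~\ref{lem:ollivier} every nontrivial word of length $<(1-2d-\eta)L$ is nontrivial in $G$. In particular, balls of radius $\sim cL$ in $G$ and in $F_m$ coincide.

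For universal sentences, the analogous claim is that every system of equations with coefficients has its solutions in $G$ ``factoring through'' solutions in $F_m$. I would attempt this via Sela-type shortening arguments applied uniformly in $L$. The key input is that random groups are torsion-free hyperbolic with hyperbolicity constant $O(L)$ while having injectivity radius $\gtrsim L$. After rescaling by $L$, limits of random groups should therefore behave like free groups.

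\emph{General sentences.} Over $F_m$, Sela's quantifier elimination reduces every sentence to a boolean combination of $\forall\exists$ sentences. The task then becomes transporting the validity of $\forall\exists$ sentences to $G$.

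The natural tool is Merzlyakov/Sela formal solutions. If $F_m\models\forall x\exists y\,\Sigma(x,y)$, then on each piece of a finite Makanin--Razborov-type diagram there is a formal solution $y=y(x)$ given by fixed words. Validity of such formal solutions persists in $G$ provided the finitely many ``exceptional'' inequations do not collapse there. Those inequations involve words of bounded length, so the local-freeness statement above should handle them.

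\emph{Main obstacle.} The hard step is uniformity. Sela's diagrams and the towers of formal solutions are constructed for a single hyperbolic group. Here one needs the same finite combinatorial data to work simultaneously for all random presentations with probability tending to $1$, even though the relator length grows with $L$.

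My first attempt would be a limiting argument. Rescale the Cayley graphs by $1/L$ and pass to asymptotic cones or ultralimits. Then show that any failure of $\sigma$ along a sequence of random groups yields a failure in a limit group that is elementarily free, contradicting $F_m\models\sigma$. Verifying that the relevant limit objects are indeed free-like is where essentially all of the difficulty of \cite{KMS2025} lies.
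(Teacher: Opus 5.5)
The paper gives no argument for this statement. It is imported from Kharlampovich--Miasnikov--Sklinos and used as a black box in Corollary~\ref{cor:KMS}, so your opening sentence already matches the paper. The outline you then give, however, is not a proof, and several of its steps fail as written.

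Your reduction to one direction and your existential case are fine: witnesses are fixed words, equations pass to the quotient, and fixed-length inequations survive by Lemma~\ref{lem:ollivier}. The step invoking Sela's quantifier elimination, however, is vacuous and circular. Since $\mathrm{Th}(F_m)$ is complete, every sentence is equivalent modulo $\mathrm{Th}(F_m)$ to $1=1$ or to $1\neq 1$. So ``reducing $\sigma$ to a Boolean combination $\beta$ of $\forall\exists$ sentences'' only means $F_m\models\sigma\leftrightarrow\beta$. That equivalence is itself a sentence true in $F_m$, and knowing that it holds in $G$ \wop is an instance of the theorem you are proving. Closing the class of transferring sentences under Boolean operations therefore never reaches $\sigma$. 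You would have to run the machinery behind the elimination (towers, formal solutions, the iterative procedure) uniformly in $L$, rather than quote its output.

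The $\forall\exists$ step rests on a false claim. The exceptional inequations are words in the universal variables, evaluated at arbitrary tuples of $G$. Their length is therefore unbounded, and local freeness at scale $cL$ says nothing about them. For instance, take $\forall x\,\exists y\,(x\neq 1\to[x,y]\neq 1)$ with formal solutions $y=a$, $y=b$ (two generators). The exceptional set is $\{x\in G:[x,a]=[x,b]=1\}$. Its triviality is a statement about arbitrarily long $x$, and it comes from global torsion-free hyperbolicity, not from balls of radius $cL$. What is really needed is that, for each fixed coefficient-free system, all solutions in $G$ factor through its Makanin--Razborov data over $F_m$, with the exceptional sets controlled as well.

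Your route to this, rescaling by $1/L$, does not produce free-like limits. The hyperbolicity constant of $G$ is genuinely of order $L$. Indeed, a $\delta$-hyperbolic group is presented by its relations of length $O(\delta)$, and every relation of length $<(1-2d-\eta)L$ is freely trivial, so $\delta=o(L)$ would make $G$ free. Hence the rescaled limits are not $\mathbb{R}$-trees, and Rips--Sela theory is unavailable there. Solutions of displacement $\ll L$ lift to $F_m$ by local freeness, and sequences of displacement $\gg L$ yield $\mathbb{R}$-tree actions. Displacement comparable to $L$ is exactly where the missing idea lies.

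Your closing limiting argument does not bridge this gap either. In the space of marked groups, $G$ converges to $F_m$ itself, but only universal sentences survive such limits. So a failure of $\sigma$ passes to $F_m$ only when $\sigma$ is existential, which is your existential case again. Showing that the ultraproduct (where every failure does survive) is elementarily free is a restatement of the theorem, not a step toward it.
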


\begin{corollary}[Fixed-width exclusion via first-order transfer]\label{cor:KMS}
Fix $n\ge 1$ and $d<1/2$.
In Gromov's density model $G(m,d,L)$, a random group $G$ has, \wop as $L\to\infty$,
no generalized torsion of width $n$.
\end{corollary}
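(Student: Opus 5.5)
The plan is to apply Theorem~\ref{thm:KMS} directly to the sentence $\varphi_n$ introduced above. First I will check that $\varphi_n$ is a genuine first-order sentence in $L_{\mathrm{grp}}$. The condition $g\neq 1$ is the negation of an atomic formula. The product $(h_1^{-1}gh_1)\cdots(h_n^{-1}gh_n)$ is a term built from the variables $g,h_1,\dots,h_n$ using only $\cdot$ and ${}^{-1}$. So $\varphi_n$ is an existential sentence, and since $n$ is fixed, $\neg\varphi_n$ is a single fixed universal sentence that does not depend on $L$. This is exactly why the width must be fixed: the property ``no generalized torsion of any width'' is an infinite conjunction $\bigwedge_n\neg\varphi_n$. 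It is not a single sentence, so the transfer theorem cannot be applied to it uniformly.

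Next I will verify that $F_m\models\neg\varphi_n$. Take $m\ge2$, as the density model assumes. Free groups are bi-orderable, for instance via the Magnus embedding into power series or via the lower central series. In a bi-ordered group, if $g>1$ then every conjugate satisfies $h^{-1}gh>1$, so a product of $n\ge1$ such conjugates is $>1$. The case $g<1$ is symmetric. Hence no nontrivial $g$ satisfies the relation, and $\neg\varphi_n$ holds in every nonabelian free group.

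Finally, I apply Theorem~\ref{thm:KMS} to $\sigma=\neg\varphi_n$. Since $\sigma$ holds in a nonabelian free group, it holds \wop in $G(m,d,L)$ for $d<1/2$. That is, \wop the random group has no element $g\neq1$ with $\prod_i h_i^{-1}gh_i=1$, which is the claim.

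The only real obstacle is bookkeeping about the precise form of Theorem~\ref{thm:KMS}. I must make sure the free group it refers to may be taken to be $F_m$, or any nonabelian free group; these are elementarily equivalent by Sela and Kharlampovich--Miasnikov, so the choice does not matter. I must also make sure that ``\wop'' is taken for fixed $m,d$ with $L\to\infty$, matching the convention in the introduction. As a sanity check, note that Theorem~\ref{thm:main} alone would not suffice. It only excludes witnesses that are short relative to $L$, whereas witnesses of fixed width could a priori have $\len{g}$ or $\len{h_i}$ growing with $L$. This is why the logical transfer is needed, rather than the diagrammatic argument.
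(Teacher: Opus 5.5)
Your proposal is correct and follows the same argument as the paper. You note that $\neg\varphi_n$ holds in nonabelian free groups by bi-orderability, and you then apply the Kharlampovich--Miasnikov--Sklinos transfer theorem to get $\neg\varphi_n$ in $G(m,d,L)$ \wop for $d<1/2$. Your extra checks (that $\varphi_n$ is a single sentence for fixed $n$, the conjugation argument in a bi-ordered group, and why Theorem~\ref{thm:main} alone would not suffice) are accurate elaborations of the paper's one-line proof.
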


\begin{proof}
Since $F_m\models \neg\varphi_n$, Theorem~\ref{thm:KMS} implies that $\neg\varphi_n$ holds in random groups \wop at density $d<1/2$.
\end{proof}

\begin{remark}
Corollary~\ref{cor:KMS} is qualitative (it rules out each fixed width), while Theorem~\ref{thm:main} provides an explicit
length inequality that applies to any potential witness and yields quantitative constraints on width under length restrictions.
\end{remark}

\section*{Acknowledgements}
The author thanks Khanh Le and Wonyong Jang for many fruitful conversations.
This work was supported by the National Research Foundation of Korea (NRF) grant funded by the Korean government (MSIT)
(No.\ RS-2025-00513595).


\end{document}